\markboth {\rm } {\rm On radial symmetry of rotating vortex patches}
\numberwithin{equation}{section}
\newtheorem{theorem}{Theorem}[section]
\newtheorem{proposition}[theorem]{Proposition}
\newtheorem{corollary}[theorem]{Corollary}
\newtheorem{lemma}[theorem]{Lemma}
\theoremstyle{definition}
\theoremstyle{remark}
\newtheorem{remark}[theorem]{Remark}
\newtheorem{remarks}[theorem]{Remarks}
\begin{document}
\begin{frontmatter}

\title{On radial symmetry of rotating vortex patches in the disc
%\tnoteref{sup}} \tnotetext[sup]{This work is partially supported by the NSFC under the grants 11371282
}

\author[1]{Guodong Wang}
\ead{wangguodong14@mails.ucas.ac.cn}
\author[2]{Bijun Zuo}
\ead{bjzuo@amss.ac.cn}

\address[1]{Institute for Advanced Study in Mathematics, Harbin Institute of Technology, Harbin {\rm150001}, P. R. China}
\address[2]{Institute of Applied Mathematics, Chinese Academy of Science, Beijing 100190, and University of Chinese Academy of Sciences, Beijing 100049,  P.R. China}
%\address[2]{Institute of Applied Mathematics, Chinese Academy of Sciences, Beijing {\rm100190}, China}
%\address[2]{University of Chinese Academy of Sciences, Beijing {\rm100049},  China}

%% use optional labels to link authors explicitly to addresses:
%% \author[label1,label2]{}
%% \address[label1]{}
%% \address[label2]{}

\begin{abstract}
In this note, we consider the radial symmetry property of rotating vortex patches for the 2D incompressible Euler equations in the unit disc. By choosing a suitable vector field to deform the patch, we show that each simply-connected rotating vortex patch $D$ with angular velocity $\Omega$, $\Omega\geq \max\{{1}/{2},({2 l^2})/{(1-l^2)^2}\}$ or $\Omega\leq -({2 l^2})/{(1-l^2)^2}$, where $l=\sup_{x\in D}|x|$, must be a disc. The main idea of the proof, which has a variational flavor, comes from a very recent paper of G\'omez-Serrano--Park--Shi--Yao, arXiv:1908.01722, where radial symmetry of rotating vortex patches in the whole plane was studied.
\end{abstract}
\begin{keyword}
 Euler equations, rotating vortex patch, radial symmetry, V-state
\end{keyword}
\end{frontmatter}

%\addcontentsline{toc}{section}{References}
%\newpage

%\tableofcontents
%\section{Introduction}
%\addcontentsline{toc}{section}{Introduction}

\section{Introduction}
Let $\mathbb{D}_r$ be the disc centered at the origin with radius $r$, that is, \[\mathbb{D}_r:=\{x\in\mathbb R^2\mid x=(x_1,x_2), |x|:=x_1^2+x_2^2<r\}.\]
In this note, we shall study the radial symmetry property of solutions to the two-dimensional incompressible Euler equations in $\mathbb{D}_r$
\begin{equation}\label{euler}
\begin{cases}
  \partial_t\mathbf{v}+(\mathbf{v}\cdot\nabla)\mathbf{v}=-\nabla P, & (x,t)\in\mathbb{D}_r\times\mathbb R_+, \\
  \nabla\cdot\mathbf{v}=0,\\
  \mathbf{v}\cdot\mathbf{n}=0, & x\in\partial\mathbb{D}_r, \\
  \mathbf{v}|_{t=0}=\mathbf{v}_0,
\end{cases}
\end{equation}
where $\mathbf{v}=(v_1,v_2)$ is the velocity field, $P$ is the scalar pressure and $\mathbf{n}$ is the outward unit normal of $\partial \mathbb {D}_r$. The boundary condition $\mathbf{v}\cdot\mathbf{n}=0$, which is usually called the impermeability boundary condition, means that there is no matter flow through $\partial \mathbb{D}_r$. By introducing the scalar vorticity $\omega:=(\partial_{x_1}v_2-\partial_{x_2}v_1)$, the Euler system can be simplified as a single equation for the vorticity
\begin{equation}\label{vor}
\begin{cases}
\partial_t\omega+\nabla^\perp(\mathcal{G}\omega)\cdot\nabla\omega=0,&(x,t)\in\mathbb{D}_{r}\times \mathbb R_+,\\
\omega|_{t=0}=\omega_0,&x\in\mathbb{D}_r,
\end{cases}
\end{equation}
where $\nabla^\perp:=(\partial_{x_2},-\partial_{x_1})$ and $\mathcal{G}_r\omega(x):=\int_{\mathbb{D}_r}G_r(x,y)\omega(y)dy,$ with $G_r$ being the Green function for $-\Delta$ in $\mathbb{D}_r$ with zero Dirichlet data
\[G_r(x,y)=-\frac{1}{2\pi}\ln|x-y|-h_r(x,y),\,\,\,h_r(x,y)=-\frac{1}{2\pi}\ln\left|\frac{rx}{|x|}-\frac{|x|y}{r}\right|.\]

Equation \eqref{vor} is usually called the vorticity equation. For weak solutions of the vorticity equation with initial vorticity $\omega_0\in L^\infty(\mathbb{D}_r)$, the global well-posedness result was proved by Yudovich \cite{Y}. See also \cite{MB}, Chapter 8. Since $\nabla^\perp(\mathcal{G}\omega)$ is a divergence-free vector field, it is easy to see that the distribution function of the solution of \eqref{vor} is independent of the time variable $t$, that is,
\[|\{x\in\mathbb D_r\mid \omega(x,t)>a \}|=|\{x\in\mathbb D_r\mid \omega_0(x)>a \}|\]
for all $a\in\mathbb R$ and $t\in \mathbb R_+$. Here and in the sequel, we use $|\cdot|$ to denote the two-dimensional Lebesgue measure. As a consequence, if the initial vorticity $\omega_0$ is a constant multiple of the characteristic function of some measurable set $D\subset \mathbb D_r$, that is, $\omega_0=\lambda I_{D}$, where $\lambda\in\mathbb R$ represents the vorticity strength, then the evolved vorticity $\omega(\cdot,t)$ must be of the form
$\omega(\cdot,t)=\lambda I_{D_t}$ with $|D_t|=|D|$ for all $t>0$. The preservation of regularity for the boundary of an evolving vortex patch was firstly proved by Chemin \cite{Chemin} for the whole plane case and then was extended to bounded domains by Depauw \cite{Depauw}.

In this paper, we are mainly concerned with rotating solution, also called $V$-state, of the Euler equations, that is, solution with the form
\begin{equation}\label{1-2}
  \omega(x,t)=w(e^{-i\Omega t}x),
\end{equation}
where $e^{-i\Omega t}x$ denotes clockwise rotation through $\pi/2$ of $x$, and $\Omega\in\mathbb R$ is the angular velocity of the rotating solution. It is easy to see that the solution rotates clockwisely if $\Omega<0$, and rotates anticlockwisely if $\Omega>0$. If $\Omega=0$, then obviously $w$ is a steady solution.

For smooth $w$, we can substitute \eqref{1-2} into the vorticity equation \eqref{vor} to obtain
\begin{equation}\label{1-3}
 \nabla\cdot(w(x)\nabla^\perp(\mathcal G_r w(x)+\frac{\Omega}{2}|x|^2))=0.
\end{equation}
Since we are going to deal with solutions with discontinuity, we need to interpret \eqref{1-3} in the following weak sense
\begin{equation}\label{1-4}
 \int_{\mathbb D_r}w(x)\nabla^\perp(\mathcal G_r w(x)+\frac{\Omega}{2}|x|^2)\cdot \nabla\phi(x)dx=0,\,\,\forall\,\phi\in C_c^\infty(\mathbb D_r).
\end{equation}
In fact, \eqref{1-3} can be obtained by integration by parts(notice that $\nabla^\perp(\mathcal G_r w(x)+\frac{\Omega}{2}|x|^2))$ is divergence-free).
We call $\omega$ a rotating vortex patch, or just rotating patch for brevity, if it satisfies \eqref{1-4} and has the form
\begin{equation}\label{form}
\omega(x,t)=w(e^{-i\Omega t}x), \,\,w=\lambda I_{D},
\end{equation}
where $\lambda$ is a parameter representing the vorticity strength of the patch.
In the sequel we shall also call the set $D$ in \eqref{form} a rotating patch.
It is not difficult to check that if $D$ is rotating patch with $C^1$ boundary and $\lambda\neq 0$, then $\lambda \mathcal G_r I_D+\frac{\Omega}{2}|x|^2$ is a constant on each connected component of $\partial D$(although on different components the constants may be different). In fact, since $D$ is a rotating patch, we have
 \begin{equation}\label{1-101}
 \lambda\int_{D}\nabla^\perp(\lambda\mathcal G_r I_D(x)+\frac{\Omega}{2}|x|^2))\cdot \nabla\phi(x)dx=0,\,\,\forall\,\phi\in C_c^\infty(\mathbb D_r).
\end{equation}
Then by integration by parts, we obtain
\begin{equation}\label{1-102} \int_{\partial D}\phi(x)\nabla^\perp(\lambda\mathcal G_r I_D(x)+\frac{\Omega}{2}|x|^2))\cdot\vec{\nu}(x)d\mathcal H^1=0,
\end{equation}
where $\vec{\nu}$ denotes the outward unit normal of $\partial D$ and $d\mathcal H^1$ denotes the one-dimensional Hausdorff measure. Taking into account the fact that $\phi$ can be chosen arbitrarily in $C_c^\infty(\mathbb D_r)$, we get
\[\nabla^\perp(\lambda\mathcal G_r I_D(x)+\frac{\Omega}{2}|x|^2))\cdot\vec{\nu}(x)\equiv0,\,\, x\in\partial D,\]
 which leads to the desired result.

In the literature, there are a large number of results on existence and stability of rotating patches(including stationary patches) in the whole plane and in the disc. For existence, roughly speaking, there are mainly two types of rotating patches in the literature. The first one is of desingularization type. As the name suggests, it is about the desingularization of point vortices. More precisely, desingularization of point vortices is to construct a family of rotating(steady) vortex patches of the Euler equations that ``shrinks" to a given rotating(steady) system of point vortices. We point the interested reader to \cite{CWW}\cite{CWWZ}\cite{W} and the references listed therein. The second type of rotating patches is of bifurcation type. It consists of finding a new rotating patch bifurcating from a given one(for example, a disc, an annulus or a Kirchhoff ellipse). Related references are \cite{Burbea}\cite{dHHM}\cite{dHMV}\cite{DZ}\cite{HMV}. There are also many efforts that have been devoted to establishing the stability or instability of rotating patches. See \cite{GHS}\cite{Ta}\cite{W} for example. It should be noted that steady patches in general bounded domains have also been studied by many authors in recent years. See \cite{CPY}\cite{CW}\cite{CW3}\cite{T} for example.

Now we come back to rotating patches in the disc $\mathbb D_r$. It is easy to see that if $D$ is a disc centered at the origin, then it must be a rotating patch with arbitrary angular velocity. Now a very natural problem arises: \textit{under what conditions on $\lambda, r, \Omega$ must a rotating patch $D$ be a disc centered at the origin?}
To answer this question, we first notice the following fact which can be easily checked by using the scaling property of the vorticity equation \eqref{vor}: $D$ is a rotating patch in $\mathbb D_r$ with vorticity strength $\lambda$ and angular velocity $\Omega$ if and only if $D/r:=\{x\in\mathbb R^2\mid rx\in D\}$ is a rotating patch in $\mathbb D_1$ with vorticity strength $1$ and angular velocity $\Omega/\lambda$. For this reason, we will only consider the case $\lambda=1$ and $r=1$ in the rest of this paper.

Before we state our main result, we shall briefly review some known results on the radial symmetry property of rotating patches, both in the whole plane and in the unit disc. For rotating patches in the whole plane, Hmidi \cite{Hm} proved that any $C^1$ simply-connected rotating patch with angular velocity $\Omega$ must be radial if $\Omega=1/2$, or $\Omega<0$ but with some extra convexity assumption. For each $\Omega\in(0,1/2)$, de la Hoz--Hmidi--Mateu--Verdera \cite{dHMV} proved existence of non-radial rotating patches with $m$-fold symmetry bifurcating at $\Omega$. Recently, G\'omez-Serrano--Park--Shi--Yao \cite{GPSY} completely solved the radial symmetry problem for rotating patches by showing that any $C^1$ rotating patch(can be non-simply-connected) with angular velocity $\Omega\in(-\infty,0)\cup[1/2,+\infty)$ must be radially symmetric, and if $\Omega=0,$ then this rotating patch must be radially symmetric up to a translation.
As for rotating patches in the unit disc, to our knowledge, there is no result on radial symmetry in the literature by now. Here we only recall two existence results. In \cite{dHHM}, based on bifurcation theory, de la Hoz--Hassainia--Hmidi--Mateu proved that for any $b\in(0,1)$ and $m$ a positive integer, there exists a family of $m$-fold symmetric rotating patches bifurcating from the steady patch $\mathbb D_b$, $b\in(0,1)$, with angular velocity $\Omega_m=(m-1+b^{2m})/(2m)$. These rotating patches are simply-connected, moreover, the angular velocity lies in the interval $(0,1/2)$ just as the whole plane case. In \cite{CWWZ}, Cao-Wan-Wang-Zhan studied the existence of rotating patches of desingularization type in the unit disc. They proved that for any fixed $\Omega>0$, there exists a family of $C^1$ simply-connected rotating patches $\omega^\lambda=\lambda I_{D^\lambda}(e^{-i\Omega t}x)$ with $\lambda$ sufficiently large, moreover, $D^\lambda$ is supported in a very small region near some point $x_\Omega$ with $|x_\Omega|=0$ if $\Omega\leq (2\pi)^{-1}$ and $|x_\Omega|=(1-(2\pi\Omega)^{-1})^{1/2}$ if $\Omega> (2\pi)^{-1}$. It is easy to see that in this situation the angular velocity still lies in the interval $(0,1/2)$.

One may ask whether all the non-radial rotating patches in the unit disc possess an angular velocity in the interval $(0,1/2)$. This is in general a difficult problem. In this paper, we partially solve this problem by showing that for any $C^1$ simply-connected rotating patch with vorticity strength $1$ and angular velocity $\Omega$, if $\Omega\geq \max\{{1}/{2},({2 l^2})/{(1-l^2)^2}\}$ or $\Omega\leq -({2 l^2})/{(1-l^2)^2}$, where $l=\sup_{x\in D}|x|$, then $D$ must be radial.

The proof is inspired by a recent paper \cite{GPSY} by G\'omez-Serrano--Park--Shi--Yao. The basic idea is as follows.
Define
\begin{equation}\label{E}
  E_\Omega(D):=\frac{1}{2}\int_D\int_DG_1(x,y)dxdy+\frac{\Omega}{2}\int_D|x|^2dx,
\end{equation}
where $G_1$ is the Green function in $\mathbb D_1$.
Since $D$ is a rotating patch, we can easily check that $E_\Omega$
is a critical point on the following rearrangement class
\[\mathcal R(D):=\{K\subset \mathbb D_1\mid |K|=|D|\}\]
in the sense that if we deform $D$ without changing its area, the variation of $E_\Omega$ is zero. On the other hand, we can choose a suitable divergence-free field $\mathbf{w}$ to deform $D$ and calculate the variation of $E_\Omega$. We will show that if $D$ is not a disc centered at the origin, then the variation of $E_\Omega$ is not zero for sufficiently large $|\Omega|$.

This paper is organized as follows. In Section 2 we state the main result and give several remarks. In Section 3 we prove the main result.

\section{Main Result}
In the rest of this paper, we will only focus on rotating patches in $\mathbb D_1$ with unit vorticity strength. For brevity, we will  use $G$(rather than $G_1$) to denote the Green function in $\mathbb D_1$, that is,
\[G(x,y)=-\frac{1}{2\pi}\ln|x-y|-h(x,y),\,\,h(x,y)=-\frac{1}{2\pi}\ln\left|\frac{x}{|x|}-|x|y\right|.\]
The corresponding Green operator $\mathcal{G}$ is defined by
\[\mathcal{G}\omega(x):=\int_DG(x,y)\omega(y)dy.\]

Our main theorem can be stated as follows.

\begin{theorem}\label{thm}
Let $D$ be a $C^1$ simply-connected rotating vortex patch in $\mathbb D_1$ with vorticity strength 1 and angular velocity $\Omega$, that is, $I_{D}(e^{-i\Omega t}x)$ is a solution to the vorticity equation \eqref{vor} with $r=1$. If $\Omega\geq\max\{\frac{1}{2},\frac{2 l^2}{(1-l^2)^2}\}$ or $\Omega\leq-\frac{2 l^2}{(1-l^2)^2}$, where $l=\sup_{x\in D}|x|$, then $D$ must be a disc centered at the origin.

\end{theorem}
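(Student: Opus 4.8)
The plan is to argue by contradiction, playing the variational characterization of $D$ against the rotational invariance of $E_\Omega$.

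Suppose $D$ is not a disc centered at the origin. As recorded above, $\psi:=\mathcal G I_D+\tfrac{\Omega}{2}|x|^2$ equals a single constant $c$ on $\partial D$; equivalently, $D$ is a critical point of $E_\Omega$ on $\mathcal R(D)$. Indeed, for any $C^1$ divergence-free vector field $\mathbf w$ near $\overline D$ with flow $\Phi_t$, the bound $l=\sup_{x\in D}|x|<1$ gives $\Phi_t(D)\in\mathcal R(D)$ for small $|t|$, and a direct computation using the symmetry of $G$ gives
\[
\frac{d}{dt}\Big|_{t=0}E_\Omega(\Phi_t(D))=\int_{\partial D}\psi\,(\mathbf w\cdot\vec\nu)\,d\mathcal H^1=c\int_{\partial D}\mathbf w\cdot\vec\nu\,d\mathcal H^1=0 .
\]
Thus the first variation vanishes for all area-preserving deformations, and one must examine the second variation $\delta^2E_\Omega(D)[\mathbf w]:=\tfrac{d^2}{dt^2}\big|_{0}E_\Omega(\Phi_t(D))$. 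Using $\psi\equiv c$ on $\partial D$ once more, a soft argument shows $\delta^2E_\Omega(D)[\mathbf w]$ depends on $\mathbf w$ only through its normal trace $f:=\mathbf w\cdot\vec\nu|_{\partial D}$ (which obeys $\int_{\partial D}f\,d\mathcal H^1=0$); write it $\mathcal Q(f)$. Take $\mathbf w_0(x):=x^\perp=(-x_2,x_1)$, whose flow is the family of rotations $R_\theta$ about the origin. Since $G$ and $|x|^2$ are rotation-invariant, $E_\Omega(R_\theta D)\equiv E_\Omega(D)$, so $\mathcal Q(f_0)=\delta^2E_\Omega(D)[\mathbf w_0]=0$ with $f_0:=x^\perp\cdot\vec\nu|_{\partial D}$; but as $D$ is simply connected and not a centered disc, $\partial D$ is not a circle about the origin, so $f_0\not\equiv0$.

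The crux is to show that, under the stated hypothesis on $\Omega$, the form $\mathcal Q$ is definite, i.e.\ $\mathcal Q(f)\neq0$ for all $f\not\equiv0$ with zero mean on $\partial D$; applied to $f_0$ this contradicts $\mathcal Q(f_0)=0$. To this end I would: (i) compute $\mathcal Q(f)$ and, after using $\psi\equiv c$ on $\partial D$, write it as a nonlocal single-layer term built from $G|_{\partial D\times\partial D}$ plus a local term $\int_{\partial D}W_\psi f^2\,d\mathcal H^1$, the weight $W_\psi$ being governed by $\partial_\nu\psi$ on $\partial D$ — here the $\tfrac\Omega2|x|^2$ part of $\psi$ enters through $\partial_\nu\big(\tfrac\Omega2|x|^2\big)=\Omega\,(x\cdot\vec\nu)$, which supplies the factor $\Omega$; (ii) fix the sign of $W_\psi$: from $-\Delta\psi=I_D-2\Omega$ and $\psi\equiv c$ on $\partial D$, the maximum principle and the Hopf lemma give $\partial_\nu\psi>0$ on $\partial D$ when $\Omega\ge\tfrac12$ ($\psi$ subharmonic in $D$) and $\partial_\nu\psi<0$ when $\Omega<\tfrac12$, in particular when $\Omega<0$ ($\psi$ superharmonic in $D$); since $\nabla\psi=\Omega x+\nabla\mathcal G I_D$ is normal to $\partial D$, for $|\Omega|$ large $D$ is star-shaped about the origin, so $x\cdot\vec\nu>0$ on $\partial D$ and $W_\psi$ has the sign of $\Omega$; (iii) estimate the remaining terms using the explicit regular part $h$ of $G$: for $x,y\in\overline{\mathbb D_l}$ one has $|\nabla_x h(x,y)|\le\tfrac{l}{2\pi(1-l^2)}$ and an $O\!\big(l^2(1-l^2)^{-2}\big)$ bound on $\nabla_x^2 h$, and feeding these into $\mathcal Q$ (the ``pure Newtonian'' part of $\mathcal Q$ being already definite with the correct sign once $\Omega\ge\tfrac12$ or $\Omega<0$, as in the whole-plane case treated in \cite{GPSY}), one checks that the local term prevails precisely when $\Omega\ge\max\{\tfrac12,\tfrac{2l^2}{(1-l^2)^2}\}$ or $\Omega\le-\tfrac{2l^2}{(1-l^2)^2}$.

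The main obstacle is step (iii), together with step (i) in the $C^1$ setting: one has to make the second-variation computation rigorous when $\partial D$ is only $C^1$ (curvature terms must be treated via integration by parts rather than pointwise), and then disentangle the three competing contributions — the local term weighted by $\partial_\nu\psi\approx\Omega(x\cdot\vec\nu)$, the logarithmic part of $G$ (which forces $\Omega\ge\tfrac12$ through the sign of $1-2\Omega$), and the regular part $h$ (which forces $|\Omega|\ge\tfrac{2l^2}{(1-l^2)^2}$) — sharply enough to recover exactly the hypothesis of the theorem. The rest — the reduction to $\psi\equiv c$ on $\partial D$, the vanishing of the first variation, the reduction of $\delta^2E_\Omega(D)$ to the normal trace, the identity $\mathcal Q(x^\perp\cdot\vec\nu)=0$, and the concluding contradiction — is soft.
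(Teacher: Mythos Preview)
Your approach is genuinely different from the paper's, and in a way that creates real difficulties. You propose a \emph{second-variation} argument: the rotation field $x^\perp$ furnishes a zero mode $f_0$ of the Hessian $\mathcal Q$, and you hope to show $\mathcal Q$ is definite under the stated bounds on $\Omega$, yielding a contradiction. The paper, by contrast, never leaves the \emph{first-variation} level. The point is that Lemma~\ref{lem} says $\int_D\nabla\psi_\Omega\cdot\mathbf w=0$ for \emph{every} divergence-free $\mathbf w\in H^1(D)$, so one is free to pick a single clever $\mathbf w$ and compute this integral by other means; if the answer is nonzero, one already has a contradiction. Following \cite{GPSY}, the paper takes $\mathbf w=x+\nabla p$ with $-\Delta p=2$, $p|_{\partial D}=0$, and decomposes
\[
0=\mathcal I_\Omega=(2\Omega-1)\Big(\tfrac{1}{4\pi}|D|^2-\int_D p\Big)+\Omega\Big(\int_D|x|^2-\tfrac{1}{2\pi}|D|^2\Big)+\mathcal L,
\]
where $\mathcal L$ collects the contribution of the regular part $h$. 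Talenti's inequality handles the first bracket, an elementary rearrangement lemma bounds the second by $\tfrac{\Omega}{\pi}|D\setminus B|^2$, and a direct estimate gives $|\mathcal L|\le\tfrac{2l^2}{\pi(1-l^2)^2}|D\setminus B|^2$; the thresholds $\tfrac12$ and $\tfrac{2l^2}{(1-l^2)^2}$ fall out immediately.

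This means your step~(iii) --- which you yourself flag as the main obstacle --- is simply absent in the paper's argument: there is no second-variation quadratic form to diagonalize, no curvature terms to regularize on a $C^1$ boundary, and no need to prove definiteness of a nonlocal form. Moreover, your parenthetical that ``the `pure Newtonian' part of $\mathcal Q$ [is] already definite \dots\ as in the whole-plane case treated in \cite{GPSY}'' mischaracterizes that paper: \cite{GPSY} does \emph{not} argue via definiteness of a second variation, it uses exactly the first-variation trick with $\mathbf w=x+\nabla p$ that the present paper adapts. So the gap in your proposal is not merely technical: you are missing the key idea that a well-chosen divergence-free field already produces a nonzero first variation, which short-circuits the entire Hessian analysis.
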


\begin{remarks}
  We should point out that the conclusion in Theorem \ref{thm} still holds true if $D$ is a steady patch, that is, $\Omega=0$. This can be deduced from the following radial symmetry property of semilinear elliptic equations with monotone nonlinearity. Fraenkel (\cite{Frae}, Corollary 3.9) proved that if $\psi$ is the solution to the following semilinear elliptic equation
  \begin{equation}\label{semi}
   \begin{cases}
     -\Delta\psi=f(\psi), &\mbox{in } \mathbb D_1, \\
     \psi>0,&\mbox{in } \mathbb D_1,\\
     \psi=0, & \mbox{on }\partial\mathbb D_1,
   \end{cases}
  \end{equation}
 where $f$ has a decomposition $f=f_1+f_2$ such that $f_1:[0,+\infty)\to\mathbb R$ is locally Lipschitz continuous and $f_2:[0,+\infty)\to\mathbb R$ is nondecreasing and $f_2\equiv0$ on $[0,\kappa]$ for some $\kappa>0$, then $\psi$ must be a radial function. The proof is based on the moving plane method. From Fraenkel's result, we can easily get radial symmetry for simply-connected patches if $\Omega=0$, since in this situation the stream function $\psi:=\mathcal G I_D$ satisfies
   \begin{equation}\label{psi}
   \begin{cases}
     -\Delta\psi=I_{\{\psi>\mu\}}, &\mbox{in } \mathbb D_1, \\
     \psi=0, & \mbox{on }\partial\mathbb D_1
   \end{cases}
  \end{equation}
for some $\mu>0$. The above argument has been used by Hmidi in \cite{Hm} to prove radial symmetry for simply-connected rotating patches  if $\Omega<0$ for the whole plane case, but with some additional convexity assumption. Although radial symmetry for steady patches indeed holds true,
our method in this paper are not able to deal with this simple case, since the boundary of the disc causes some inevitable trouble as we will see in the proof.
\end{remarks}

As mentioned in Section 1, $D$ is a rotating patch in $\mathbb D_r$ with vorticity strength $\lambda$ and angular velocity $\Omega$ if and only if $D/r:=\{x\in\mathbb R^2\mid rx\in D\}$ is a rotating patch in $\mathbb D_1$ with vorticity strength $1$ and angular velocity $\Omega/\lambda$. Therefore we can easily deduce the following radial symmetry property for rotating patches in $\mathbb D_r.$

\begin{corollary}\label{cor1}
Let $D$ be a $C^1$ simply-connected rotating vortex patch in $\mathbb D_r$ with vorticity strength $\lambda$ and angular velocity $\Omega$. If $\Omega\geq\max\{\frac{\lambda}{2},\frac{2\lambda l^2r^2}{(r^2-l^2)^2}\}$ or $\Omega\leq-\frac{2\lambda l^2r^2}{(r^2-l^2)^2}$, where $l=\sup_{x\in D}|x|$, then $D$ must be a disc centered at the origin.

\end{corollary}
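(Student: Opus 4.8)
This corollary is simply the scale-invariant reformulation of Theorem~\ref{thm}, so the plan is to make the rescaling precise and then carry out the elementary bookkeeping of the hypotheses; the only mildly delicate points are the scaling identity for the Green function and the sign of $\lambda$, and there is no genuine obstacle here since the substance lives in Theorem~\ref{thm}.

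Given a $C^1$ simply-connected rotating patch $D\subseteq\mathbb D_r$ of strength $\lambda$ and angular velocity $\Omega$, I would put $D':=D/r=\{x:rx\in D\}\subseteq\mathbb D_1$ and check the scaling invariance already invoked in Section~1, namely that $D'$ is a $C^1$ simply-connected rotating patch in $\mathbb D_1$ of strength $1$ and angular velocity $\Omega':=\Omega/\lambda$. The computation hinges on the identity $G_r(rx,ry)=G_1(x,y)$ --- the two $\tfrac1{2\pi}\ln r$ contributions, one coming from $\ln|x-y|$ and one from the regular part $h_r$, cancel --- which gives $\mathcal G_r[\,\omega(\cdot/r)\,](x)=r^2\,(\mathcal G_1\tilde\omega)(x/r)$; substituting $x\mapsto rx$ and $w=\lambda I_D$ into the weak formulation \eqref{1-4} and dividing through by $\lambda r^2$ produces exactly the weak equation for $D'$ in $\mathbb D_1$ with strength $1$ and angular velocity $\Omega/\lambda$ (this step is equally valid for $\lambda<0$; only the inequality directions below then flip). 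Note also $l':=\sup_{x\in D'}|x|=l/r$.

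It then remains to apply Theorem~\ref{thm} to $D'$ and translate the conclusion back. Since
\[
\frac{2\,(l')^{2}}{\bigl(1-(l')^{2}\bigr)^{2}}=\frac{2\,(l/r)^{2}}{\bigl(1-(l/r)^{2}\bigr)^{2}}=\frac{2\,l^{2}r^{2}}{(r^{2}-l^{2})^{2}},
\]
multiplying the inequalities $\Omega'\ge\max\{\tfrac12,\tfrac{2(l')^{2}}{(1-(l')^{2})^{2}}\}$ (resp.\ $\Omega'\le-\tfrac{2(l')^{2}}{(1-(l')^{2})^{2}}$) through by $\lambda>0$ turns them into exactly $\Omega\ge\max\{\tfrac{\lambda}{2},\tfrac{2\lambda l^{2}r^{2}}{(r^{2}-l^{2})^{2}}\}$ (resp.\ $\Omega\le-\tfrac{2\lambda l^{2}r^{2}}{(r^{2}-l^{2})^{2}}$), which is precisely the stated hypothesis. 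Under either one Theorem~\ref{thm} forces $D'$ to be a disc centred at the origin, and hence so is $D=rD'$, which completes the proof.
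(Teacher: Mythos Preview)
Your proposal is correct and follows exactly the approach the paper indicates: the paper does not give a separate proof of this corollary but derives it from the scaling fact (stated in Section~1, and repeated just before the corollary) that $D\subset\mathbb D_r$ with strength $\lambda$ and angular velocity $\Omega$ corresponds to $D/r\subset\mathbb D_1$ with strength $1$ and angular velocity $\Omega/\lambda$, together with Theorem~\ref{thm}. Your write-up simply makes this rescaling explicit (including the Green-function identity $G_r(rx,ry)=G_1(x,y)$ and the arithmetic translating $l'=l/r$ into the stated bounds), which is precisely what the paper leaves to the reader.
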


\begin{remark}
  By letting $r$ tend to infinity, we can easily get the result proved by G\'omez-Serrano--Park--Shi--Yao in \cite{GPSY}.
\end{remark}

If $D$ is a $C^1$ rotating patch in $\mathbb D_1$ with vorticity strength $1$ and angular velocity $\Omega$, then $D^c:=\{x\in\mathbb D_1\mid x\notin D\}$ is a rotating patch with angular velocity ${1}/{2}-\Omega$. In fact, we need only notice that $\mathcal{G}I_D+\frac{\Omega}{2}|x|^2=$constant on $\partial D$ implies $\mathcal{G}I_{D^c}+(\frac{1}{4}-\frac{\Omega}{2})|x|^2=$constant on $\partial D^c$. Taking into account Theorem \ref{thm}, we can easily get the following corollary.
\begin{corollary}\label{cor}
Let $D\subset\mathbb D_1$ be the complement of a $C^1$ simply-connected domain.
If $D$ is a rotating vortex patch with vorticity strength 1 and angular velocity ${\Omega}$ with $\Omega\leq\max\{0,\frac{1}{2}-\frac{2l^2}{(1-l^2)^2}\}$ or $\Omega\geq\frac{1}{2}+\frac{2l^2}{(1-l^2)^2}$, $l=\sup_{x\in D^c}|x|$, then $D$ must be an annulus.
\end{corollary}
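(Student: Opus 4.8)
The plan is to reduce the statement to Theorem \ref{thm} by the complement duality already recorded in the paragraph preceding the corollary, so that essentially no new analysis is needed beyond bookkeeping of the angular velocity. Set $E:=D^{c}=\mathbb{D}_1\setminus D$; by hypothesis $E$ is a $C^1$ simply-connected domain, and since $l=\sup_{x\in E}|x|<1$ (implicit in the finiteness of $2l^2/(1-l^2)^2$) it is compactly contained in $\mathbb{D}_1$, with free boundary $\partial E$ shared with $D$. The identity $\mathcal{G}I_E+(\tfrac14-\tfrac{\Omega}{2})|x|^2=\tfrac14-(\mathcal{G}I_D+\tfrac{\Omega}{2}|x|^2)$ shows that, $D$ being a rotating patch, $E$ is itself a $C^1$ simply-connected rotating patch in $\mathbb{D}_1$ of unit strength with angular velocity $\Omega'=\tfrac12-\Omega$ and $\sup_{x\in E}|x|=l$.

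First I would apply Theorem \ref{thm} to $E$: whenever $\Omega'\ge\max\{\tfrac12,\tfrac{2l^2}{(1-l^2)^2}\}$ or $\Omega'\le-\tfrac{2l^2}{(1-l^2)^2}$, the theorem forces $E$ to be a disc $\mathbb{D}_b$ centered at the origin, and then $D=\mathbb{D}_1\setminus\overline{\mathbb{D}_b}$ is an annulus, which is exactly the desired conclusion. It remains only to rewrite the two hypotheses in terms of $\Omega$ via $\Omega'=\tfrac12-\Omega$. The second branch is immediate: $\Omega'\le-\tfrac{2l^2}{(1-l^2)^2}$ is equivalent to $\Omega\ge\tfrac12+\tfrac{2l^2}{(1-l^2)^2}$, matching the second condition in the statement.

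The one delicate point, and the only real obstacle, is the translation of the first branch, because the substitution $\Omega\mapsto\tfrac12-\Omega$ reverses orientation and therefore interchanges maxima and minima. Explicitly, $\Omega'\ge\tfrac12$ becomes $\Omega\le0$ and $\Omega'\ge\tfrac{2l^2}{(1-l^2)^2}$ becomes $\Omega\le\tfrac12-\tfrac{2l^2}{(1-l^2)^2}$, so the conjunction $\Omega'\ge\max\{\tfrac12,\tfrac{2l^2}{(1-l^2)^2}\}$ transforms into $\Omega\le\min\{0,\tfrac12-\tfrac{2l^2}{(1-l^2)^2}\}$. I would therefore verify carefully that the range on which Theorem \ref{thm} actually delivers radial symmetry of $E$ is $\Omega\le\min\{0,\tfrac12-\tfrac{2l^2}{(1-l^2)^2}\}$ together with $\Omega\ge\tfrac12+\tfrac{2l^2}{(1-l^2)^2}$, and reconcile this with the first inequality as typeset in the statement (the two coincide when $\tfrac{2l^2}{(1-l^2)^2}=\tfrac12$). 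Once the admissible range is pinned down, the argument is finished, since on it $E$ is a centered disc and hence $D$ is an annulus; the entire content of the corollary is the duality $D\leftrightarrow D^c$ plus this change of variables for the angular velocity.
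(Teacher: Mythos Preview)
Your approach is exactly the paper's: the paragraph preceding the corollary already records the duality $D\leftrightarrow D^c$ with $\Omega'=\tfrac12-\Omega$, and the corollary is obtained by applying Theorem~\ref{thm} to $E=D^c$ and translating the thresholds. Your identity $\mathcal{G}I_E+(\tfrac14-\tfrac{\Omega}{2})|x|^2=\tfrac14-(\mathcal{G}I_D+\tfrac{\Omega}{2}|x|^2)$ (using $\mathcal{G}I_{\mathbb D_1}(x)=\tfrac14(1-|x|^2)$) is correct and makes the duality explicit.

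You are also right about the ``delicate point'': the substitution $\Omega'=\tfrac12-\Omega$ reverses order, so $\Omega'\ge\max\{\tfrac12,\tfrac{2l^2}{(1-l^2)^2}\}$ becomes $\Omega\le\min\{0,\tfrac12-\tfrac{2l^2}{(1-l^2)^2}\}$, not $\Omega\le\max\{0,\tfrac12-\tfrac{2l^2}{(1-l^2)^2}\}$ as typeset. This is a typo in the statement of the corollary; the range on which the argument actually yields an annulus is the one you compute, and the paper offers no additional ingredient that would recover the larger (max) range from Theorem~\ref{thm}.
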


\section{Proof}
In this section, we give the proof of Theorem \ref{thm}. To begin with, let us explain the basic idea of the proof.
As mentioned in Section 1, we need to deform $D$ under a suitable area-preserving flow, and show that if $D$ is not a disc centered at the origin, then the variation of the energy function $E_\Omega$(defined by \eqref{E}) is strictly positive or strictly negative if $|\Omega|$ is very large, which will lead to a contradiction. More specifically, for any divergence-free vector field $\mathbf{w},$ we define the area-preserving flow $\Phi_s$ by solving the following ordinary differential equation
\begin{equation}\label{ORD}
\begin{cases}\frac{d\Phi_s(x)}{ds}=\mathbf{w}(\Phi_s(x)), &s\in\mathbb R, \\
\Phi_0(x)=x.
\end{cases}
\end{equation}
We consider the variation of $E_\Omega$ along the flow $\Phi_s$, that is, $\frac{dE_\Omega(D_s)}{ds}\big|_{s=0}$ with \[D_s=\Phi_s(D):=\{\Phi_s(x)\mid x\in D\}.\]
By direct calculation, we have
\[\frac{dE_\Omega(D_s)}{ds}\bigg|_{s=0}=\int_D\nabla(\mathcal{G}I_D(x)+\frac{\Omega}{2}|x|^2)\cdot\mathbf{w}dx.\]
For sufficiently smooth $\mathbf{w}$ defined in $\mathbb D_1$, we can easily check $\frac{dE_\Omega(D_s)}{ds}\big|_{s=0}=0$ by integration by part. In the following lemma, we show that $\mathbf{w}\in H^1(D;\mathbb R^2)$ is sufficient.

\begin{lemma}\label{lem}
Let $\mathbf{w}\in H^1(D;\mathbb R^2)$ be a divergence-free vector field in $D$. Then
\begin{equation}\label{w}\int_D\nabla(\mathcal{G}I_D(x)+\frac{\Omega}{2}|x|^2)\cdot\mathbf{w}(x)dx=0.\end{equation}
\end{lemma}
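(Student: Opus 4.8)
The plan is to reduce identity \eqref{w} to the Gauss--Green formula, the point being that the function
\[\psi(x):=\mathcal{G}I_D(x)+\frac{\Omega}{2}|x|^2\]
is constant on $\partial D$ (since $D$ is a rotating patch, as observed in Section~1) and is regular enough for the integration by parts to be legitimate even though $\mathbf{w}$ is only $H^1$.

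First I would record the regularity of $\psi$. As $I_D\in L^\infty(\mathbb D_1)$ and $-\Delta(\mathcal{G}I_D)=I_D$ in $\mathbb D_1$ with zero Dirichlet data, the $L^p$ elliptic estimates give $\mathcal{G}I_D\in W^{2,p}(\mathbb D_1)$ for every $p<\infty$, hence $\mathcal{G}I_D\in C^{1,\alpha}(\overline{\mathbb D_1})$ for every $\alpha\in(0,1)$; in particular $\psi\in C^1(\overline{\mathbb D_1})$, so $\nabla\psi$ is bounded and continuous on $\overline D$. Moreover, as recalled in the introduction, because $D$ is a $C^1$ rotating patch with unit vorticity strength, $\psi$ equals a constant $c$ on $\partial D$; here the simple connectedness of $D$ enters, ensuring that $\partial D$ is a single $C^1$ Jordan curve on all of which $\psi\equiv c$.

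Next, since $\mathbf{w}\in H^1(D;\mathbb R^2)$ and $\psi\in C^1(\overline D)$, the product $\psi\mathbf{w}$ lies in $H^1(D;\mathbb R^2)\subset W^{1,1}(D;\mathbb R^2)$, and the divergence theorem for Sobolev vector fields on the bounded $C^1$ domain $D$ (with boundary values taken in the trace sense $H^1(D)\to L^2(\partial D)$) gives
\[\int_D\nabla\cdot(\psi\mathbf{w})\,dx=\int_{\partial D}(\psi\,\mathbf{w})\cdot\vec\nu\,d\mathcal H^1.\]
On the left, the product rule together with $\nabla\cdot\mathbf{w}=0$ yields $\nabla\cdot(\psi\mathbf{w})=\nabla\psi\cdot\mathbf{w}$. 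On the right, the trace of $\psi$ on $\partial D$ is the constant $c$, so the boundary integral equals $c\int_{\partial D}\mathbf{w}\cdot\vec\nu\,d\mathcal H^1$, which in turn equals $c\int_D\nabla\cdot\mathbf{w}\,dx=0$ by a further application of the divergence theorem to $\mathbf{w}$ itself. Comparing the two expressions gives $\int_D\nabla\psi\cdot\mathbf{w}\,dx=0$, which is exactly \eqref{w}.

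The one genuine subtlety --- and the step I would be most careful about --- is precisely this use of the Gauss--Green formula for a vector field that is merely $H^1$: classical integration by parts is not available, so one must appeal to the Sobolev version of the divergence theorem on Lipschitz (here $C^1$) domains and to the trace theorem, and verify that multiplying by the $C^1$ function $\psi$ keeps the field in $W^{1,1}$ and that the product rule for distributional derivatives applies. If one prefers to avoid quoting these, there is an equivalent elementary route exploiting that $D$ is simply connected: write $\mathbf{w}=\nabla^\perp g$ with $g\in H^2(D)$, approximate $g$ in $H^2(D)$ by functions $g_n$ smooth up to $\overline D$, and set $\mathbf{w}_n:=\nabla^\perp g_n$. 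Each $\mathbf{w}_n$ is smooth and divergence-free, $\nabla\psi\cdot\mathbf{w}_n=\nabla\cdot(\psi\nabla^\perp g_n)$, and the classical divergence theorem gives $\int_D\nabla\psi\cdot\mathbf{w}_n\,dx=\int_{\partial D}\psi\,(\nabla^\perp g_n\cdot\vec\nu)\,d\mathcal H^1=c\int_{\partial D}\nabla^\perp g_n\cdot\vec\nu\,d\mathcal H^1=0$, since $\nabla^\perp g_n\cdot\vec\nu$ is the tangential derivative of $g_n$ along the closed curve $\partial D$ and hence integrates to zero. Letting $n\to\infty$ and using $\mathbf{w}_n\to\mathbf{w}$ in $L^2(D)$ together with $\nabla\psi\in L^\infty(D)$ yields \eqref{w}.
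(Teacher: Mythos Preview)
Your argument is correct and follows essentially the same route as the paper: record that $\psi=\mathcal{G}I_D+\frac{\Omega}{2}|x|^2$ is $C^1$ by elliptic regularity and constant on $\partial D$, then integrate by parts and use $\nabla\cdot\mathbf w=0$. The only cosmetic difference is that the paper first proves the identity for $\mathbf w\in C^1(\overline D;\mathbb R^2)$ and then passes to $H^1$ by density, whereas you invoke the Sobolev divergence/trace theorem directly (or, in your alternative, approximate via a stream function); these are equivalent ways to justify the same integration by parts.
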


\begin{proof}
Since $D$ is a rotating patch with angular velocity $\Omega$, we have $\mathcal{G}I_D+\frac{\Omega}{2}|x|^2=\mu$ on $\partial D$ for some $\mu\in \mathbb R$.
Moreover, by elliptic regularity theory we have $\mathcal{G}I_D+\frac{\Omega}{2}|x|^2\in C^1(\overline{D};\mathbb R^2)$. To verify \eqref{w}, we firstly assume $\mathbf{w}\in C^1(\overline{D};\mathbb R^2)$. By integration by parts we have
\begin{equation}\label{den}
\begin{split}
&\int_D\mathbf{w}(x)\cdot\nabla(\mathcal{G}I_D(x)+\frac{\Omega}{2}|x|^2)dx\\
=&\int_D\nabla\cdot(\mathbf{w}(x)(\mathcal{G}I_D(x)+\frac{\Omega}{2}|x|^2))dx-\int_D\nabla\cdot\mathbf{w}(x)(\mathcal{G}I_D(x)+\frac{\Omega}{2}|x|^2)dx\\
=&\int_{\partial D}(\mathcal{G}I_D(x)+\frac{\Omega}{2}|x|^2)\mathbf{w}(x)\cdot\vec{\nu}(x)d\mathcal{H}^1-\int_D\nabla\cdot\mathbf{w}(x)(\mathcal{G}I_D(x)+\frac{\Omega}{2}|x|^2)dx\\
=&\mu\int_{\partial D}\mathbf{w}(x)\cdot\vec{\nu}(x)d\mathcal{H}^1-\int_D\nabla\cdot\mathbf{w}(x)(\mathcal{G}I_D(x)+\frac{\Omega}{2}|x|^2)dx\\
=&\mu\int_{D}\nabla\cdot\mathbf{w}(x)dx-\int_D\nabla\cdot\mathbf{w}(x)(\mathcal{G}I_D(x)+\frac{\Omega}{2}|x|^2)dx,\\
\end{split}
\end{equation}
where $\vec{\nu}$ is the outward unit normal of $\partial D$. Since $C^1(\overline{D};\mathbb R^2)$ is dense in $H^1(D;\mathbb R^2)$, so by density argument \eqref{den} in fact holds for any $\mathbf{w}\in H^1(D;\mathbb R^2)$. Therefore the lemma is proved by the fact that $\mathbf{w}$ is a divergence-free field.
\end{proof}

To continue, we need to choose a suitable $\mathbf w$. Here we follow the choice in \cite{GPSY}, that is, $\mathbf w=x+\nabla p,$ where $p\in H^1_0(D)$ is the solution of the following elliptic problem
\begin{equation}\label{p}
\begin{cases}
  -\Delta p=2, & x\in D, \\
  x=0, & x\in\partial D.
\end{cases}
\end{equation}
In the following we begin to calculate the quantity $\int_D\nabla(\mathcal{G}I_D(x)+\frac{\Omega}{2}|x|^2)\cdot\mathbf{w}(x)dx$. For simplicity, we denote
\[\mathcal{I}_\Omega:=\int_D\nabla(\mathcal{G}I_D(x)+\frac{\Omega}{2}|x|^2)\cdot(x+\nabla p(x))dx,\]
\[\psi_\Omega(x):=\mathcal{G}I_D+\frac{\Omega}{2}|x|^2.\]
We will show that if $D$ is not a disc centered at the origin and $|\Omega|$ is sufficiently large, then $|\mathcal I{}_\Omega|>0$, which leads to a contradiction.

By direct calculation, we have
\begin{equation}\label{calculation}
\begin{split}
\mathcal{I}_\Omega&=\int_D(x+\nabla p)\cdot\nabla \psi_\Omega dx\\
&=\int_Dx\cdot\nabla_x\int_DG(x,y)dydx+\Omega\int_D|x|^2dx+\int_D\nabla p(x)\cdot\nabla \psi_\Omega(x) dx\\
&=\int_Dx\cdot\nabla_x\int_D-\frac{1}{2\pi}\ln|x-y|-h(x,y)dydx+\Omega\int_D|x|^2dx-\int_Dp(x)\Delta \psi_\Omega(x) dx\\
&=-\frac{1}{2\pi}\int_D\int_D\frac{x\cdot(x-y)}{|x-y|^2}dxdy-\int_Dx\cdot\nabla_x\int_Dh(x,y)dydx+\Omega\int_D|x|^2dx-\int_Dp(x)\Delta \psi_\Omega(x) dx\\
&=-\frac{1}{4\pi}\int_D\int_D\frac{|x-y|^2}{|x-y|^2}dxdy-\int_Dx\cdot\nabla_x\int_Dh(x,y)dydx+\Omega\int_D|x|^2dx-\int_Dp(x)\Delta \psi_\Omega(x) dx\\
&=-\frac{1}{4\pi}|D|^2-\int_Dx\cdot\nabla_x\int_Dh(x,y)dydx+\Omega\int_D|x|^2dx+(1-2\Omega)\int_Dp(x)dx\\
&=(2\Omega-1)\left(\frac{1}{4\pi}|D|^2-\int_Dp(x)dx\right)+\Omega\left(\int_D|x|^2dx-\frac{1}{2\pi}|D|^2\right)-\int_Dx\cdot\nabla_x\int_Dh(x,y)dydx,
\end{split}
\end{equation}
where we used the antisymmetry of the function $\frac{x\cdot(x-y)}{|x-y|^2}$ and the fact $-\Delta\psi_\Omega=1-2\Omega$ in $D$.
Therefore we need to deal the following three terms in \eqref{calculation}
\[\mathcal{J}_\Omega:=(2\Omega-1)\left(\frac{1}{4\pi}|D|^2-\int_Dp(x)dx\right)\]
\[\mathcal{K}_\Omega:=\Omega\left(\int_D|x|^2dx-\frac{1}{2\pi}|D|^2\right)\]
\[\mathcal{L}:=-\int_Dx\cdot\nabla_x\int_Dh(x,y)dydx.\]

To deal with $\mathcal J_\Omega$, we need the following result proved by Talenti in \cite{Talenti}.
\begin{proposition}[\cite{Talenti}, Theorem 1]\label{talenti}
  Let $D$ be a planar bounded domain with $C^1$ boundary, and let $p$ be the solution of the following equation
  \begin{equation}\label{p2}
\begin{cases}
  -\Delta p=2, & x\in D, \\
  x=0, & x\in\partial D.
\end{cases}
\end{equation}
Then we have
\[\int_Dp(x)dx\leq\frac{1}{4\pi}|D|^2.\]
Moreover, the equality is achieved if and only if $D$ is a disc.
\end{proposition}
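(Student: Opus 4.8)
The plan is to recognize Proposition~\ref{talenti} as \emph{Saint--Venant's torsional rigidity inequality} and to prove it by Schwarz symmetrization combined with the variational characterization of the torsion energy. The quantity $T(D):=\tfrac12\int_D p\,dx$ is, up to the factor coming from $-\Delta p=2$ (rather than $-\Delta p=1$), the torsional rigidity of $D$, and the inequality $\int_D p\le\frac{1}{4\pi}|D|^2$ is equivalent to $T(D)\le T(D^\ast)$, where $D^\ast$ is the disc of the same area. The starting point is the identity: for every $v\in H_0^1(D)$ with $v\not\equiv0$,
\begin{equation*}
\frac{\left(\int_D v\,dx\right)^2}{\int_D|\nabla v|^2\,dx}\ \le\ \frac12\int_D p\,dx,
\end{equation*}
with equality precisely when $v$ is a constant multiple of $p$. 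This follows by writing $\int_D v=\tfrac12\int_D v(-\Delta p)=\tfrac12\int_D\nabla v\cdot\nabla p$, applying Cauchy--Schwarz, and using $\int_D|\nabla p|^2=\int_D p(-\Delta p)=2\int_D p$; in particular $T(D)$ equals the maximum of the left-hand side over $H_0^1(D)$.

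Next I would symmetrize. Let $D^\ast$ be the open disc centered at the origin with $|D^\ast|=|D|$, and let $p^\ast\in H_0^1(D^\ast)$ be the Schwarz decreasing rearrangement of $p$ (legitimate since $p>0$ in $D$ by the maximum principle). Equimeasurability gives $\int_{D^\ast}p^\ast\,dx=\int_D p\,dx$, while the P\'olya--Szeg\H{o} inequality gives $\int_{D^\ast}|\nabla p^\ast|^2\,dx\le\int_D|\nabla p|^2\,dx$. Feeding $p^\ast$ into the variational characterization on $D^\ast$ yields
\begin{equation*}
T(D)=\frac{\left(\int_D p\,dx\right)^2}{\int_D|\nabla p|^2\,dx}=\frac{\left(\int_{D^\ast}p^\ast\,dx\right)^2}{\int_D|\nabla p|^2\,dx}\le\frac{\left(\int_{D^\ast}p^\ast\,dx\right)^2}{\int_{D^\ast}|\nabla p^\ast|^2\,dx}\le T(D^\ast).
\end{equation*}
On the disc of radius $\rho$ the solution of \eqref{p2} is explicitly $p(x)=\tfrac12(\rho^2-|x|^2)$, so a one-line computation gives $T(D^\ast)=\frac{\pi\rho^4}{8}=\frac{1}{8\pi}|D^\ast|^2=\frac{1}{8\pi}|D|^2$, and therefore $\int_D p\,dx=2T(D)\le\frac{1}{4\pi}|D|^2$, which is the claimed inequality.

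For the equality case --- which I expect to be the genuinely delicate point --- I would argue as follows. If $\int_D p=\frac{1}{4\pi}|D|^2$ then every inequality above is an equality; in particular $p^\ast$ realizes the maximum of the Rayleigh quotient on $D^\ast$, so $p^\ast$ is a multiple of the torsion function of $D^\ast$, and $\int_{D^\ast}|\nabla p^\ast|^2=\int_D|\nabla p|^2$, i.e.\ equality holds in P\'olya--Szeg\H{o}. Invoking the sharp form of P\'olya--Szeg\H{o} (Brothers--Ziemer), equality forces $p$ to coincide, up to a translation, with the radial function $p^\ast$, \emph{provided} the critical set $\{\nabla p=0\}$ is Lebesgue-null. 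This non-degeneracy is exactly where the hypotheses enter: since $\partial_{x_1}p$ and $\partial_{x_2}p$ are harmonic in $D$, a common zero set of positive measure would force $\nabla p\equiv0$ by unique continuation, contradicting $-\Delta p=2$; hence the critical set is null, Brothers--Ziemer applies, and $D$ is a disc up to a null set, which the $C^1$ regularity of $\partial D$ then upgrades to the honest conclusion. An alternative, more self-contained route to both parts is Talenti's original level-set method: with $\mu(t)=|\{p>t\}|$, the coarea formula, the isoperimetric inequality, and Cauchy--Schwarz on each level set $\{p=t\}$ combine into a differential inequality for $\mu$ that is saturated only by the disc; the equality discussion there again reduces to showing the level sets are (almost) discs, which is the same obstacle in a different guise.
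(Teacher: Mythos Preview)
The paper does not prove Proposition~\ref{talenti} at all: it is quoted verbatim from \cite{Talenti} as a black box and used without further comment. So there is no ``paper's own proof'' to compare against, and your submission supplies an argument where the paper supplies none.

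Your argument is the standard one and is essentially correct. The variational identity $T(D)=\sup_{v\in H_0^1(D)}(\int_D v)^2/\int_D|\nabla v|^2$ is derived cleanly, the P\'olya--Szeg\H{o} step is right, and the explicit computation on the disc checks out. One small wrinkle in the equality discussion: the Brothers--Ziemer hypothesis is a condition on the \emph{rearranged} function $p^\ast$, not on $p$ itself, so your unique-continuation argument for $\{\nabla p=0\}$ is aimed at the wrong function. This is harmless here, however, because your chain of equalities already forces $p^\ast$ to be a constant multiple of the torsion function $\tfrac12(\rho^2-|x|^2)$ on $D^\ast$, whose only critical point is the origin; hence the Brothers--Ziemer hypothesis is trivially satisfied and the conclusion $p=p^\ast(\,\cdot\,-x_0)$ follows. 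It would be cleaner to say this directly rather than to argue about $\{\nabla p=0\}$.
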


To deal with $\mathcal{K}_\Omega,$ we need the following lemma.
\begin{lemma}\label{K}
Let $D$ be a measurable set in $\mathbb R^2$. Then we have
\[\int_D|x|^2dx\geq\frac{1}{2\pi}|D|^2+\frac{1}{\pi}|D\setminus B|^2,\]
where $B$ is the disc centered at the origin with the same area as $D$. Moreover, the equality is achieved if and only if $D$ is an annulus centered at the origin.
\end{lemma}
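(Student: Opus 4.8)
\emph{Proof idea.} The plan is to recognise the claimed inequality as a one–dimensional rearrangement (``bathtub'') inequality in disguise and to optimise explicitly. First I would pass to polar coordinates: for each radius $\rho>0$ let $f(\rho)\in[0,2\pi]$ be the $\mathcal{H}^1$–measure of $\{\theta\in[0,2\pi)\mid\rho e^{i\theta}\in D\}$, so that, by Fubini, $f$ is measurable and $|D|=\int_0^\infty f(\rho)\rho\,d\rho$, $\int_D|x|^2\,dx=\int_0^\infty f(\rho)\rho^3\,d\rho$. Since the weight $|x|^2$ is radial, the natural move is to switch to the area variable $t=\pi\rho^2$ and set $g(t):=f(\sqrt{t/\pi})/(2\pi)$, a measurable function with $0\le g\le 1$. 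A direct change of variables gives $|D|=\int_0^\infty g\,dt=:M$ and $\int_D|x|^2\,dx=\tfrac1\pi\int_0^\infty t\,g(t)\,dt$, while $B$ corresponds exactly to the set $\{t<M\}$, so $|D\setminus B|=\int_M^\infty g\,dt=:A$ with $0\le A\le M$ (because $g\le1$). In these variables the lemma becomes the clean statement
\[
\int_0^\infty t\,g(t)\,dt\ \ge\ \tfrac12 M^2+A^2 .
\]

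To prove this I would split $\int_0^\infty t g=\int_0^M t g+\int_M^\infty t g$ and note that the constraints $\int_0^M g=M-A$ and $\int_M^\infty g=A$ act on the two pieces of $g$ separately, so the two terms may be minimised independently. By the bathtub principle the first term is smallest when the mass of $g$ on $[0,M]$ is pushed to the left, i.e. $g=\mathbf{1}_{[0,M-A]}$, which gives $\tfrac12(M-A)^2$; the second is smallest when $g=\mathbf{1}_{(M,M+A)}$, which gives $MA+\tfrac12A^2$. Adding the two bounds yields $\tfrac12(M-A)^2+MA+\tfrac12A^2=\tfrac12M^2+A^2$, exactly what is needed; undoing the substitutions restores the constants $\tfrac1{2\pi}|D|^2$ and $\tfrac1\pi|D\setminus B|^2$.

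For the equality case I would observe that equality forces equality in both bathtub steps, hence $g=\mathbf{1}_{[0,M-A]\cup(M,M+A)}$ up to a null set; transporting this back through $t=\pi\rho^2$ shows that $D$ coincides, up to measure zero, with the union of the disc $\{\pi|x|^2\le M-A\}$ and the concentric annulus $\{M<\pi|x|^2<M+A\}$, which is an annulus centred at the origin precisely in the degenerate cases $A=0$ (then $D=B$ is a disc) or $M-A=0$, and more generally whenever $D$ is taken to be connected. In any case the lemma is only used later through $|D\setminus B|^2\ge0$ with equality iff $D=B$ a.e., so this is enough. There is no genuinely hard step here; the only points deserving care are checking that the two extremal profiles are simultaneously feasible (they are, since together they integrate to $M$ with tail mass $A$, consistently with the definitions of $M$ and $A$) and bookkeeping the constants through the area substitution $t=\pi\rho^2$.
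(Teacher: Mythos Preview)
Your argument is correct and follows essentially the same bathtub/rearrangement idea as the paper: split $D$ into its parts inside and outside $B$ and push each to the extremal concentric configuration; your substitution $t=\pi\rho^2$ is simply a clean one--dimensional repackaging of the paper's direct computation of $\int_{D\setminus B}|x|^2$ and $\int_{B\setminus D}|x|^2$. Your equality analysis is in fact sharper than the paper's stated ``if and only if'' (the extremals are all radial sets of the form $\{t\in[0,M-A]\cup(M,M+A)\}$, not only annuli), though your closing aside understates how the equality case is used later---it is invoked explicitly in the proof of the main theorem, where your more precise description would serve equally well since any such extremal $D$ is radial and hence gives $\mathcal L=0$.
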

\begin{proof}
   Notice that
   \[\int_D|x|^2dx=\int_B|x|^2dx+\int_{D\setminus B}|x|^2dx-\int_{B\setminus D}|x|^2dx=\frac{1}{2\pi}|D|^2+\int_{D\setminus B}|x|^2dx-\int_{B\setminus D}|x|^2dx.\]
   So it suffices to prove
   \[\int_{D\setminus B}|x|^2dx-\int_{B\setminus D}|x|^2dx\geq\frac{1}{\pi}|D\setminus B|^2.\]
To this end, we notice that for $|D\setminus B|$ being fixed, $\int_{D\setminus B}|x|^2dx$ attains its minimum value if and only if $D\setminus B$ is an annulus with its inner boundary coinciding with $\partial B$. Similarly, for $|B\setminus D|$ being fixed, $\int_{B\setminus D}|x|^2dx$ attains its maximum value if and only if $B\setminus D$ is an annulus contained in $B$ with its outer boundary coinciding with $\partial B$. Therefore by direct calculation we can easily get
\[\int_{D\setminus B}|x|^2dx\geq \frac{2|B||D\setminus B|+|D\setminus B|^2}{2\pi},\]
\[\int_{B\setminus D}|x|^2dx\leq \frac{2|B||D\setminus B|-|D\setminus B|^2}{2\pi}.\]
Now we can subtracting the above two inequalities to get the desired result.

\end{proof}

By Proposition \ref{talenti} and Lemma \ref{K} we see that if $\Omega\geq{1}/{2}$, then $\mathcal I_\Omega+\mathcal J_\Omega\geq \frac{\Omega}{\pi}|D\setminus B|^2$, and if $\Omega\leq0$ and $|\Omega|$ is sufficiently large, then $\mathcal I_\Omega+\mathcal J_\Omega\leq \frac{\Omega}{\pi}|D\setminus B|^2$. If $|D\setminus B|>0$, in order to get a contradiction, we need to show that $|\mathcal L|$ is controlled by a reasonable constant multiple of $|D\setminus B|^2$. This is exactly what we will do in the following lemma.

\begin{lemma}\label{L}
For any measurable set $D\subset \mathbb D_1,$ define $l=sup_{x\in D}|x|$. Then the following inequality holds
\[\left|\mathcal{L}\right|\leq \frac{2l^2}{\pi(1-l^2)^2}|D\setminus B|^2.\]
\end{lemma}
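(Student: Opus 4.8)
The plan is to bound $\mathcal{L}=-\int_D x\cdot\nabla_x\int_D h(x,y)\,dy\,dx$ by exploiting the explicit form of the regular part $h(x,y)=-\frac{1}{2\pi}\ln\left|\frac{x}{|x|}-|x|y\right|$. First I would compute $\nabla_x h(x,y)$ explicitly. Writing $a=\frac{x}{|x|}-|x|y$, one has $h(x,y)=-\frac{1}{2\pi}\ln|a|$, so $\nabla_x h = -\frac{1}{2\pi}\frac{(\nabla_x a)^T a}{|a|^2}$. A cleaner route: note that $\left|\frac{x}{|x|}-|x|y\right|^2 = 1 - 2x\cdot y + |x|^2|y|^2 = \left|\frac{x}{|x|}-|x|y\right|^2$, and in fact $\left|\frac{x}{|x|}-|x|y\right| = \frac{1}{|x|}\bigl||x|^2 y - x\bigr|$ when $|x|\neq 0$ — wait, more usefully, the standard identity $\left|\frac{x}{|x|}-|x|y\right| = |x^*-y|\cdot|x|$ where $x^*=x/|x|^2$ is the Kelvin inverse, but since $h$ only appears through $\nabla_x$, the key is that $h(x,y)$ is a smooth harmonic function of $x$ on $\mathbb{D}_1$ for each fixed $y$, and $G$ vanishes on $\partial\mathbb{D}_1$. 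I would differentiate directly: from $|a|^2 = 1 - 2x\cdot y + |x|^2|y|^2$ we get $\nabla_x(|a|^2) = -2y + 2|y|^2 x$, hence $\nabla_x h(x,y) = -\frac{1}{2\pi}\cdot\frac{-2y+2|y|^2x}{2(1-2x\cdot y+|x|^2|y|^2)} = \frac{1}{2\pi}\cdot\frac{y-|y|^2 x}{1-2x\cdot y+|x|^2|y|^2}$.

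Next I would form the integrand $x\cdot\nabla_x h(x,y) = \frac{1}{2\pi}\cdot\frac{x\cdot y-|y|^2|x|^2}{1-2x\cdot y+|x|^2|y|^2}$ and pointwise bound it for $x,y\in D$. Since $|x|,|y|\leq l<1$, the denominator satisfies $1-2x\cdot y+|x|^2|y|^2 \geq 1-2|x||y|+|x|^2|y|^2 = (1-|x||y|)^2 \geq (1-l^2)^2$, and the numerator obeys $|x\cdot y-|y|^2|x|^2|\leq |x||y|+|x|^2|y|^2 \leq l^2+l^4 \le 2l^2$ (using $l<1$). Hence $|x\cdot\nabla_x h(x,y)|\leq \frac{1}{2\pi}\cdot\frac{2l^2}{(1-l^2)^2}$ pointwise. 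This would naively give $|\mathcal{L}|\leq \frac{l^2}{\pi(1-l^2)^2}|D|^2$, which is the wrong bound — it has $|D|^2$ where we want $|D\setminus B|^2$, and it is off by a factor of $2$. So the crux of the lemma is the improvement from $|D|^2$ to $|D\setminus B|^2$.

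The main obstacle, then, is the sharpening to $|D\setminus B|^2$. The idea should parallel the proof of Lemma \ref{K}: the quantity $\mathcal{L}$ must vanish (or be controlled trivially) when $D$ is an annulus centered at the origin, because then $\int_D h(x,y)\,dy$ becomes radial in $x$ and $x\cdot\nabla_x(\text{radial}) $ integrates against a radial region in a way that... actually more promisingly, I expect the correct approach is to write $\mathcal{L}$ as a double integral that is \emph{antisymmetric-enough} in $x,y$ or that can be split using $B$. Concretely, I would decompose each copy of $D$ as $B\cup(D\setminus B)\setminus(B\setminus D)$ and expand $\mathcal{L}$ into pieces; the piece with both variables in $B$ should vanish by radial symmetry (for $x,y\in B$ the inner integral $\int_B h(x,y)\,dy$ is radial in $x$, so $\int_B x\cdot\nabla_x(\cdots)\,dx = \int_B |x|\partial_{|x|}(\cdots)\,dx$, and a further computation shows this is zero or has a sign), and the cross terms will each carry a factor $|D\setminus B|$ while the doubly-off-diagonal term carries $|D\setminus B|^2$. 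Alternatively — and this may be the cleanest — observe that $\int_B h(x,y)\,dy$ as a function of $x\in\mathbb{D}_1$ is harmonic (average of harmonic functions) hence its restriction to $B$... I would then combine the pointwise bound $\frac{1}{2\pi}\cdot\frac{2l^2}{(1-l^2)^2}$ on the kernel with the measure-theoretic gain to land exactly on $\frac{2l^2}{\pi(1-l^2)^2}|D\setminus B|^2$. I anticipate that reconciling the factor of $2$ and correctly identifying which combinatorial pieces survive will require care, and that the key structural input is that the "all-in-$B$" contribution to $\mathcal{L}$ is zero.
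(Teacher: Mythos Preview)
Your computation of the kernel $x\cdot\nabla_x h(x,y)=\frac{1}{2\pi}\,\frac{x\cdot y-|x|^2|y|^2}{1-2x\cdot y+|x|^2|y|^2}=:-L(x,y)$ and the decomposition strategy $I_D=I_B-I_{B\setminus D}+I_{D\setminus B}$ both match the paper. The genuine gap is in what you expect the decomposition to produce. You say ``the piece with both variables in $B$ should vanish'' and that ``the cross terms will each carry a factor $|D\setminus B|$.'' That is not enough: a surviving cross term of size $|B|\cdot|D\setminus B|$ cannot be absorbed into a bound of order $|D\setminus B|^2$. What the paper actually establishes is much stronger: for \emph{every} measurable set $E$,
\[
\int_E\int_B L(x,y)\,dx\,dy=0,
\]
because for each fixed $y$ and each fixed radius $\rho$ the angular integral
\[
\int_0^{2\pi}\frac{\rho^2|y|^2-\rho|y|\cos\theta}{1-2\rho|y|\cos\theta+\rho^2|y|^2}\,d\theta
\]
vanishes (a Poisson-kernel/residue computation, valid since $\rho|y|<1$). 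This kills \emph{all} terms in the nine-term expansion that contain at least one full $B$-integral, leaving only
\[
\mathcal{L}=\int_{B\setminus D}\int_{B\setminus D}L\,dxdy+\int_{D\setminus B}\int_{D\setminus B}L\,dxdy-2\int_{D\setminus B}\int_{B\setminus D}L\,dxdy,
\]
and now each surviving area factor is $|D\setminus B|^2$ (recall $|B\setminus D|=|D\setminus B|$). Your radial-symmetry/harmonicity heuristics for the $B\times B$ block do not supply this stronger cancellation.

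A secondary point on the constant: applying your crude bound $|L|\le\frac{l^2}{\pi(1-l^2)^2}$ to the four surviving terms yields $\frac{4l^2}{\pi(1-l^2)^2}|D\setminus B|^2$, twice the stated constant. The paper recovers the factor $2$ by using the separate sharp pointwise bounds
\[
-\frac{l^2}{2\pi(1-l^2)}\le L(x,y)\le \frac{l^2(1+l^2)}{2\pi(1-l^2)^2}
\]
and observing that $|\mathcal{L}|\le 2(L_+-L_-)\,|D\setminus B|^2$ with $L_+-L_-=\frac{l^2}{\pi(1-l^2)^2}$.
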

\begin{proof}
Recall
\[h(x,y)=-\frac{1}{2\pi}\ln\left|\frac{x}{|x|}-{|x|y}\right|.\]
By direct calculation, we have
\begin{equation*}
\begin{split}
\mathcal{L}=-\int_Dx\cdot\nabla_x\int_Dh(x,y)dydx&=\frac{1}{2\pi}\int_Dx\cdot\nabla_x\int_D\ln\left|\frac{x}{|x|}-|x|y\right|dydx\\
&=\frac{1}{4\pi}\int_Dx\cdot\nabla_x\int_D\ln(1-2x\cdot y+|x|^2|y|^2)dydx\\
&=\frac{1}{2\pi}\int_D\int_D\frac{|x|^2|y|^2-x\cdot y}{1-2x\cdot y+|x|^2|y|^2}dxdy
\end{split}
\end{equation*}
So it suffices to prove
\begin{equation}\label{divi}
\left|\frac{1}{2\pi}\int_D\int_D\frac{|x|^2|y|^2-x\cdot y}{1-2x\cdot y+|x|^2|y|^2}dxdy\right|\leq\frac{2l^2}{\pi(1-l^2)^2}|D\setminus B|^2.
\end{equation}
For simplicity we denote \[L(x,y):=\frac{1}{2\pi}\frac{|x|^2|y|^2-x\cdot y}{1-2x\cdot y+|x|^2|y|^2}.\]
We divide the integral in \eqref{divi} into the following nine parts
\begin{equation}\label{nine}
\begin{split}
\int_D\int_DL(x,y)dxdy&=\int_B\int_BL(x,y)dxdy-\int_B\int_{B\setminus D}L(x,y)dxdy+\int_B\int_{D\setminus B}L(x,y)dxdy\\
&-\int_{B\setminus D}\int_BL(x,y)dxdy+\int_{B\setminus D}\int_{B\setminus D}L(x,y)dxdy-\int_{B\setminus D}\int_{D\setminus B}L(x,y)dxdy\\
&+\int_{D\setminus B}\int_BL(x,y)dxdy-\int_{D\setminus B}\int_{B\setminus D}L(x,y)dxdy+\int_{D\setminus B}\int_{D\setminus B}L(x,y)dxdy\\
&=\int_B\int_BL(x,y)dxdy-2\int_B\int_{B\setminus D}L(x,y)dxdy+2\int_B\int_{D\setminus B}L(x,y)dxdy\\
&+\int_{B\setminus D}\int_{B\setminus D}L(x,y)dxdy+\int_{D\setminus B}\int_{D\setminus B}L(x,y)dxdy-2\int_{D\setminus B}\int_{B\setminus D}L(x,y)dxdy,
\end{split}
\end{equation}
where we used the symmetry of the function $L$.

Now we claim
\begin{equation}\label{claim}
\int_B\int_BL(x,y)dxdy=\int_B\int_{B\setminus D}L(x,y)dxdy=\int_B\int_{D\setminus B}L(x,y)dxdy=0.
\end{equation}
To prove \eqref{claim}, we prove a more general result, that is, for any measurable set $E$ there holds
\begin{equation}\label{clai}
\int_B\int_EL(x,y)dxdy=\int_E\int_{B}L(x,y)dxdy=0.
\end{equation}
In fact, by using the polar coordinates we have
\[\int_E\int_{B}L(x,y)dxdy=\int_E\int_0^b\int_0^{2\pi}\frac{1}{2\pi}\frac{\rho^2|y|^2-\rho|y|\cos(\theta)}{1-2\rho|y|\cos(\theta)
+\rho^2|y|^2}d\theta d\rho dy,\]
where $b$ is the radius of $B$. Notice that for fixed $\rho,y$, we have $\rho|y|\in(0,1)$. We can use Cauchy's residue theorem to calculate the above integral to obtain
\[\int_0^{2\pi}\frac{1}{2\pi}\frac{\rho|y|\cos(\theta)-\rho^2|y|^2}{1-2\rho|y|\cos(\theta)
+\rho^2|y|^2}d\theta =0\]
for fixed $y$ and $\rho$, which proves \eqref{clai}.

We continue to estimate $\mathcal{L}$. From the above claim we deduce
\[|\mathcal{L}|=\left|\int_{B\setminus D}\int_{B\setminus D}L(x,y)dxdy+\int_{D\setminus B}\int_{D\setminus B}L(x,y)dxdy-2\int_{D\setminus B}\int_{B\setminus D}L(x,y)dxdy\right|.\]
By simple calculation, we can obtain the following
upper bound and lower bound for $L(x,y)$
\[L(x,y)\geq -\frac{l^2}{2\pi(1-l^2)},\,\,\forall\,x,y\in B\cup D,\]
\[L(x,y)\leq\frac{l^2(1+l^2)}{2\pi(1-l^2)^2},\,\,\forall\,x,y\in B\cup D,\]
where $l=\sup_{x\in D}|x|$.
Thus we obtain
\[|\mathcal{L}|\leq \frac{2l^2}{\pi(1-l^2)^2}|D\setminus B|^2,\]
which completes the proof.

\end{proof}

\begin{proof}[Proof of Theorem \ref{thm}]
Assume that $D$ is not a disc centered at the origin, or equivalently $|D\setminus B|>0$. We will deduce a contradiction if $\Omega\geq\max\{\frac{1}{2},\frac{2 l^2}{(1-l^2)^2}\}$ or $\Omega\leq-\frac{2 l^2}{(1-l^2)^2}$.

First we consider the case $\Omega\geq\max\{\frac{1}{2},\frac{2 l^2}{(1-l^2)^2}\}$. By Proposition \ref{talenti}, Lemma \ref{K} and Lemma \ref{L}, we obtain
\[0=\mathcal I_\Omega=\mathcal J_\Omega+\mathcal K_\Omega+\mathcal L\geq \frac{\Omega}{\pi}|D\setminus B|^2-|\mathcal{L}|\geq \frac{\Omega}{\pi}|D\setminus B|^2- \frac{2l^2}{\pi(1-l^2)^2}|D\setminus B|^2.\]
If $\Omega>\frac{2 l^2}{(1-l^2)^2}$, then we get an obvious contradiction. If $\Omega=\frac{2 l^2}{(1-l^2)^2},$ then $D$ must be an annulus centered at the origin. In fact, if $D$ is not an annulus centered at the origin, then the inequality in Lemma \ref{K} is strict, which leads to the following contradiction
\[0=\mathcal I_\Omega=\mathcal J_\Omega+\mathcal K_\Omega+\mathcal L> \frac{\Omega}{\pi}|D\setminus B|^2-|\mathcal{L}|\geq \frac{\Omega}{\pi}|D\setminus B|^2- \frac{2l^2}{\pi(1-l^2)^2}|D\setminus B|^2= 0.\]
But once $D$ is an annulus centered at the origin, we can repeat the calculation of \eqref{clai} to obtain $\mathcal{L}=0$,
thus
\[0=\mathcal I_\Omega=\mathcal J_\Omega+\mathcal K_\Omega+\mathcal L> \frac{\Omega}{\pi}|D\setminus B|^2-|\mathcal{L}|=\frac{\Omega}{\pi}|D\setminus B|^2>0,\]
which is also a contradiction.

Now we consider the case $\Omega\leq-\frac{2 l^2}{(1-l^2)^2}$.  By Proposition \ref{talenti}, Lemma \ref{K} and Lemma \ref{L}, we obtain
\[0=\mathcal I_\Omega=\mathcal J_\Omega+\mathcal K_\Omega+\mathcal L\leq\frac{\Omega}{\pi}|D\setminus B|^2+|\mathcal L|\leq \frac{\Omega}{\pi}|D\setminus B|^2+ \frac{2l^2}{\pi(1-l^2)^2}|D\setminus B|^2.\]
If $\Omega<-\frac{2l^2}{(1-l^2)^2}$, then we get an obvious contradiction. If $\Omega=-\frac{2l^2}{(1-l^2)^2}$, we can still deduce that $D$ is an annulus centered at the origin. If otherwise, by Lemma \ref{K} we have
\[0=\mathcal I_\Omega=\mathcal J_\Omega+\mathcal K_\Omega+\mathcal L<\frac{\Omega}{\pi}|D\setminus B|^2+|\mathcal L|\leq \frac{\Omega}{\pi}|D\setminus B|^2+ \frac{2l^2}{\pi(1-l^2)^2}|D\setminus B|^2=0,\]
which is a contradiction. But once $D$ is an annulus centered at the origin, we immediately know $\mathcal{L}=0,$ therefore
\[0=\mathcal I_\Omega=\mathcal J_\Omega+\mathcal K_\Omega+\mathcal L\leq\frac{\Omega}{\pi}|D\setminus B|^2+\mathcal L= \frac{\Omega}{\pi}|D\setminus B|^2<0,\]
which also leads to a contradiction.
\end{proof}

From the proof, we see that the bound $\Omega\geq\max\{\frac{1}{2},\frac{2 l^2}{(1-l^2)^2}\}$ or $\Omega\leq-\frac{2 l^2}{(1-l^2)^2}$ is not optimal. In fact, one can improve this bound by giving $\mathcal L$ a more accurate estimate. But for a general domain $D$ this is usually very difficult.

\noindent{\bf Acknowledgments:}
 We would like to thank Piye Sun for the useful discussion about the estimate of $\mathcal L.$

\renewcommand\refname{References}
\renewenvironment{thebibliography}[1]{%
\section*{\refname}
\list{{\arabic{enumi}}}{\def\makelabel##1{\hss{##1}}\topsep=0mm
\parsep=0mm
\partopsep=0mm\itemsep=0mm
\labelsep=1ex\itemindent=0mm
\settowidth\labelwidth{\small[#1]}%
\leftmargin\labelwidth \advance\leftmargin\labelsep
\advance\leftmargin -\itemindent
\usecounter{enumi}}\small
\def\newblock{\ }
\sloppy\clubpenalty4000\widowpenalty4000
\sfcode`\.=1000\relax}{\endlist}
\bibliographystyle{model1b-num-names}
%\bibliography{refs}

\end{document}